\providecommand{\U}[1]{\protect\rule{.1in}{.1in}}
\providecommand{\U}[1]{\protect\rule{.1in}{.1in}}
\newtheorem{theorem}{Theorem}
\newtheorem{definition}[theorem]{Definition}
\newtheorem{proposition}[theorem]{Proposition}
\newtheorem{remark}[theorem]{Remark}
\newenvironment{proof}[1][Proof]{\noindent\textbf{#1.} }{\ \rule{0.5em}{0.5em}}
\begin{document}

\title{The minimal context for local boundedness in topological vector spaces}

\author{M.D. Voisei}

\date{{}}
\maketitle
\begin{abstract}
The local boundedness of classes of operators is analyzed on different
subsets directly related to their Fitzpatrick functions and characterizations
of the topological vector spaces for which that local boundedness
holds is given in terms of the uniform boundedness principle. For
example the local boundedness of a maximal monotone operator on the
algebraic interior of its domain convex hull is a characteristic of
barreled locally convex spaces. 
\end{abstract}

\section{Introduction}

The local boundedness of a monotone operator defined on an open set
of a Banach space was first intuited by Kato in \cite{MR0238135}
while performing a comparison of (sequential) demicontinuity and hemicontinuity.
Under a Banach space settings, the first result concerning the local
boundedness of monotone operators appears in 1969 and is due to Rockafellar
\cite[Theorem\ 1, p. 398]{MR0253014}. In 1972 in \cite{MR0312336},
the local boundedness of monotone-type operators is proved under a
Fr\'echet space context. In 1988 the local boundedness of a monotone
operator defined in a barreled normed space is proved in \cite{MR995141}
on the algebraic interior of the domain. The authors of \cite{MR995141}
call their assumptions {}``minimal'' but they present no argument
about the minimality of their hypotheses or in what sense that minimality
is to be understood.

Our principal aim, in Theorems \ref{LBG}, \ref{LBM}, \ref{LBMax},
\ref{LBM-conv}, \ref{LBMax-conv},  \ref{LBM-conv-plus} below, is
to show that the context assumptions in \cite[Theorem\ 2]{MR995141},
\cite{MR0312336}, \cite[Theorem 1]{MR0253014} are not minimal and
to characterize topological vector spaces that offer the proper context
for an operator to be locally bounded, for example, on the algebraic
interior of its domain convex hull. 

The plan of the paper is as follows. In the next section we introduce
the main notions and notations followed by a study of the so called
Banach-Steinhaus property. The main object of Sections 3 and 4 is
to provide characterizations of the topological vector spaces on which
the local boundedness of an operator holds on different subsets directly
related to the operator via its Fitzpatrick function.

\section{Preliminaries}

In this paper the conventions $\sup\emptyset=-\infty$ and $\inf\emptyset=\infty$
are enforced.

Given $(E,\mu)$ a real topological vector space (TVS for short) and
$A\subset E$ we denote by {}``$\operatorname*{conv}A$'' the \emph{convex
hull} of $A$, {}``$\operatorname*{cl}_{\mu}(A)=\overline{A}^{\mu}$''
the $\mu-$\emph{closure} of $A$, {}``$\operatorname*{int}_{\mu}A$''
the $\mu-$\emph{topological interior }of $A$, {}``$\operatorname*{core}A$''
the \emph{algebraic interior} of $A$. The use of the  $\mu-$notation
is avoided whenever the topology $\mu$ is implicitly understood.

We denote by $\iota_{A}$ the \emph{indicator} \emph{function} of
$A\subset E$ defined by $\iota_{A}(x):=0$ for $x\in A$ and $\iota_{A}(x):=\infty$
for $x\in E\setminus A$. 

For $f,g:E\rightarrow\overline{\mathbb{R}}$ we set $[f\leq g]:=\{x\in E\mid f(x)\leq g(x)\}$;
the sets $[f=g]$, $[f<g]$, and $[f>g]$ being defined in a similar
manner.

Throughout this paper, if not otherwise explicitly mentioned, $(X,\tau)$
is a non-trivial (that is, $X\neq\{0\}$) TVS, $X^{\ast}$ is its
topological dual endowed with the weak-star topology $w^{\ast}$,
the topological dual of $(X^{\ast},w^{\ast})$ is identified with
$X$ and the weak topology on $X$ is denoted by $w$. The \emph{duality
product} of $X\times X^{\ast}$ is denoted by $\left\langle x,x^{\ast}\right\rangle :=x^{\ast}(x)=:c(x,x^{\ast})$,
for $x\in X$, $x^{\ast}\in X^{\ast}$.

The class of neighborhoods of $x\in X$ in $(X,\tau)$ is denoted
by $\mathscr{V}_{\tau}(x)$. 

As usual, with respect to the dual system $(X,X^{*})$, for $A\subset X$,
the \emph{orthogonal of} $A$ is $A^{\perp}:=\{x^{*}\in X^{*}\mid\langle x,x^{*}\rangle=0,\ \forall x\in A\}$,
the \emph{polar of} $A$ is $A^{\circ}:=\{x^{*}\in X^{*}\mid|\langle x,x^{*}\rangle|\le1,\ \forall x\in A\}$,
the support function of $A$ is $\sigma_{A}(x^{*}):=\sup_{x\in A}\langle x,x^{*}\rangle$,
$x^{*}\in X^{*}$ while for $B\subset X^{*}$, the orthogonal of $B$
is $B^{\perp}:=\{x\in X\mid\langle x,x^{*}\rangle=0,\ \forall x^{*}\in B\}$,
the polar of $B$ is $B^{\circ}:=\{x\in X\mid|\langle x,x^{*}\rangle|\le1,\ \forall x^{*}\in B\}$,
and the support function of $B$ is $\sigma_{B}(x):=\sup_{x^{*}\in B}\langle x,x^{*}\rangle$,
$x\in X$.

To a multi\-function $T:X\rightrightarrows X^{\ast}$ we associate
its \emph{graph:} $\operatorname*{Graph}T=\{(x,x^{\ast})\in X\times X^{\ast}\mid x^{\ast}\in Tx\}$,
\emph{domain}: $D(T):=\{x\in X\mid Tx\neq\emptyset\}=\operatorname*{Pr}_{X}(\operatorname*{Graph}T)$,
and \emph{range}: $R(T):=\{x^{*}\in X^{*}\mid x^{*}\in T(x),\ \mathrm{for\ some}\ x\in X\}=\operatorname*{Pr}_{X^{*}}(\operatorname*{Graph}T)$.
Here $\operatorname*{Pr}_{X}$ and $\operatorname*{Pr}_{X^{*}}$ are
the projections of $X\times X^{*}$ onto $X$ and $X^{\ast}$, respectively.
When no confusion can occur, $T$ will be identified with $\operatorname*{Graph}T$.

The \emph{Fitzpatrick function }associated to $T:X\rightrightarrows X^{*}$,
$\varphi_{T}:X\times X^{*}\rightarrow\overline{\mathbb{R}}$ is given
by (see \cite{MR1009594})\[
\varphi_{T}(x,x^{*}):=\sup\{\langle a,x^{*}\rangle+\langle x-a,a^{*}\rangle\mid a^{*}\in Ta\},\ (x,x^{*})\in X\times X^{*}.\]
 Accordingly, for every $\epsilon\in\mathbb{R}$, the set $T_{\epsilon}^{+}:=[\varphi_{T}\le c+\epsilon]$
describes all $(x,x^{*})\in X\times X^{*}$ that are $\epsilon-$monotonically
related (m.r. for short) to $T$, that is $(x,x^{*})\in[\varphi_{T}\le c+\epsilon]$
iff $\langle x-a,x^{*}-a^{*}\rangle\ge-\epsilon$, for every $(a,a^{*})\in T$.

For every $\epsilon\ge0$, we consider on a TVS $(X,\tau)$ the following
classes of functions and operators 
\begin{description}
\item [{$\Lambda(X)$}] the class formed by proper convex functions $f:X\rightarrow\overline{\mathbb{R}}$.
Recall that $f$ is \emph{proper} if $\mathrm{dom}\: f:=\{x\in X\mid f(x)<\infty\}$
is nonempty and $f$ does not take the value $-\infty$, 
\item [{$\Gamma_{\tau}(X)$}] the class of functions $f\in\Lambda(X)$
that are $\tau$--lower semi-continuous (\emph{$\tau$--}lsc for short), 
\item [{$\mathcal{M}_{\epsilon}(X)$}] the class of non-empty $\epsilon-$monotone
operators $T:X\rightrightarrows X^{*}$. Recall that $T:X\rightrightarrows X^{*}$
is $\epsilon-$\emph{monotone} if $\left\langle x_{1}-x_{2},x_{1}^{\ast}-x_{2}^{\ast}\right\rangle \ge-\epsilon$,
for all $(x_{1},x_{1}^{*}),(x_{2},x_{2}^{*})\in T$, 
\item [{\textmd{$\mathcal{M}_{\epsilon}^{+}(X):=\{T_{\epsilon}^{+}\mid T\in\mathcal{M}_{\epsilon}(X)\}$,}}]~
\item [{$\mathfrak{M}_{\epsilon}(X)$}] the class of $\epsilon-$maximal
monotone operators $T:X\rightrightarrows X^{*}$. The maximality is
understood in the sense of graph inclusion as subsets of $X\times X^{*}$,
\item [{\textmd{$\mathscr{M}_{\infty}(X):=\bigcup_{\varepsilon\ge0}\mathcal{M}_{\varepsilon}(X)=\{T:X\rightrightarrows X^{*}\mid\inf_{z,w\in T}c(z-w)\neq\pm\infty\}$,}}]~
\item [{\textmd{the}}] \emph{$\epsilon-$subdifferential} of $f$ at $x\in X$:
$\partial_{\epsilon}f(x):=\{x^{\ast}\in X^{\ast}\mid\left\langle x^{\prime}-x,x^{\ast}\right\rangle +f(x)\leq f(x^{\prime})+\epsilon,\ \forall x^{\prime}\in X\}$
for $x\in\operatorname*{dom}f$; $\partial_{\epsilon}f(x):=\emptyset$
for $x\not\in\operatorname*{dom}f$,
\item [{\textmd{$\mathscr{G}_{\epsilon}(X):=\{\partial_{\epsilon}f\mid f\in\Gamma_{\tau}(X)\}$,}}] $\mathfrak{B}(X):=\{\partial\sigma_{B}\mid B\subset X^{*}$
is $w^{*}-$bounded$\}$. 
\end{description}
For $\epsilon=0$, the use of the $\epsilon-$notation is avoided.

\begin{definition}\emph{ \label{def-lb} Let $(X,\tau)$ be a TVS.
A multi-function $T:X\rightrightarrows X^{*}$ is }locally bounded
at\emph{ $x_{0}\in X$ if there exists $U\in\mathscr{V}_{\tau}(x_{0})$
such that $T(U):=\cup_{x\in U}Tx$ is an equicontinuous subset of
$X^{*}$; }locally bounded on\emph{ $S\subset X$ if $T$ is locally
bounded at every $x\in S$. The local boundedness of $T:X\rightrightarrows X^{*}$
is interesting only at $x_{0}\in\overline{D(T)}$ ($\tau-$closure)
since for every $x_{0}\not\in\overline{D(T)}$ there is $U\in\mathscr{V}_{\tau}(x_{0})$
such that $T(U)$ is void. Consequently, $T$ is locally bounded outside
$\overline{D(T)}$. }\end{definition}

Given a TVS $(X,\tau)$ with topological dual $X^{*}$, a set $B\subset X^{*}$
is:

\medskip

$\bullet$ \emph{pointwise-bounded} if $B_{x}:=\{x^{*}(x)\mid x^{*}\in B\}$
is bounded in $\mathbb{R}$, for every $x\in X$ or, equivalently,
$B$ is $w^{*}-$bounded in $X^{*}$;

$\bullet$ ($\tau-$)\emph{equicontinuous }if for every $\epsilon>0$
there is $V_{\epsilon}\in\mathscr{V}_{\tau}(0)$ such that $x^{*}(V_{\epsilon})\subset(-\epsilon,\epsilon)$,
for every $x^{*}\in B$, or, equivalently, $B$ is contained in the
polar $V^{\circ}$ of some (symmetric) $V\in\mathscr{V}_{\tau}(0)$. 

\medskip

We say that a topological vector space $(X,\tau)$ has the \emph{Banach-Steinhaus
property} if every pointwise-bounded subset of $X^{*}$ is equicontinuous.

\begin{theorem} \label{BS char} Let $(X,\tau)$ be a TVS. The following
are equivalent

\medskip

\emph{(i) }$(X,\tau)$ has the Banach-Steinhaus property.

\medskip

\emph{(ii) }Every absorbing, convex, and weakly-closed subset of $X$
is a $\tau-$neigh\-bor\-hood of $0\in X$. 

\medskip

\emph{(iii) }Every $f\in\Gamma_{w}(X)$ is $\tau-$continuous on $\operatorname*{int}_{\tau}(\operatorname*{dom}f)$
(or, equivalently, $f$ is bounded above on a $\tau-$neighborhood
of some $x\in\operatorname*{dom}f$). In this case, for every $f\in\Gamma_{\tau}(X)$,
$\operatorname*{int}_{\tau}(\operatorname*{dom}f)=\operatorname*{core}(\operatorname*{dom}f)$. 

\end{theorem}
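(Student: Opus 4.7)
The strategy is polar duality for (i)~$\Leftrightarrow$~(ii), and the correspondence $A \leftrightarrow \iota_A$ together with sublevel sets for (ii)~$\Leftrightarrow$~(iii).

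For (ii)~$\Rightarrow$~(i), given a $w^*$-bounded $B \subseteq X^*$, the polar $B^\circ$ is convex (intersection of half-spaces), weakly closed, and absorbing (since $\sup_{x^* \in B}|\langle x, x^*\rangle| < \infty$ for each $x$). By (ii), $B^\circ \in \mathscr{V}_\tau(0)$, so $B \subseteq B^{\circ\circ}$ is equicontinuous. For (i)~$\Rightarrow$~(ii), I would reduce to the balanced case via the symmetrization $A \cap (-A)$, which retains the three properties and is in addition balanced. Its polar is $w^*$-bounded, hence equicontinuous by (i), so it sits inside $V^\circ$ for some $V \in \mathscr{V}_\tau(0)$. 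The bipolar theorem then gives $V \subseteq V^{\circ\circ} \subseteq (A \cap (-A))^{\circ\circ} = A \cap (-A) \subseteq A$.

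For (ii)~$\Rightarrow$~(iii), fix $f \in \Gamma_w(X)$ with $x_0 \in \operatorname{int}_\tau(\operatorname{dom} f)$; by translating and subtracting a constant, assume $x_0 = 0$ and $f(0) = 0$. For any $c > 0$ the sublevel set $[f \le c]$ is convex, weakly closed (since $f$ is $w$-lsc), and absorbing: using $0 \in \operatorname{core}(\operatorname{dom} f)$, for every $x \in X$ there is $\delta > 0$ with $\pm \delta x \in \operatorname{dom} f$, and convexity yields $f(tx) \to 0$ as $t \to 0$, so $[f \le c]$ absorbs $x$. Then (ii) makes $[f \le c]$ a $\tau$-neighborhood of $0$, boundedness of a convex function on a $\tau$-neighborhood entails $\tau$-continuity there, and the classical propagation of continuity extends this to all of $\operatorname{int}_\tau(\operatorname{dom} f)$ and shows that the two formulations inside (iii) are equivalent. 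For (iii)~$\Rightarrow$~(ii), apply the ``bounded above'' form of (iii) to $\iota_A \in \Gamma_w(X)$: it yields some $x \in A$ with a $\tau$-neighborhood of $x$ inside $A$, so $\operatorname{int}_\tau A \neq \emptyset$. Since $0 \in \operatorname{core} A$, pick $z \in \operatorname{int}_\tau A$ and $\lambda > 0$ with $-\lambda z \in A$; the standard segment lemma then places $0$ on an open sub-segment of $[z, -\lambda z]$ contained in $\operatorname{int}_\tau A$.

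For the closing assertion about $\Gamma_\tau(X)$, only $\operatorname{core}(\operatorname{dom} f) \subseteq \operatorname{int}_\tau(\operatorname{dom} f)$ requires argument. Given $x_0 \in \operatorname{core}(\operatorname{dom} f)$, normalize as above and pass to $g(x) := \max\{f(x), f(-x)\} \in \Gamma_\tau(X)$: it is even, nonnegative with $g(0) = 0$, and has $0 \in \operatorname{core}(\operatorname{dom} g)$. The set $B := [g \le 1]$ is $\tau$-closed, absolutely convex, and absorbing, so its polar $B^\circ$ is $w^*$-bounded, hence equicontinuous by (i), yielding $V \subseteq V^{\circ\circ} \subseteq B^{\circ\circ}$ for some $V \in \mathscr{V}_\tau(0)$. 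The step I expect to be the main obstacle is bridging between $B^{\circ\circ}$, which by bipolar is the weak closure of $B$, and $B$ itself, so that $V$ actually sits inside $\operatorname{dom} f$; this is precisely the reason why (iii) is formulated over $\Gamma_w$ rather than $\Gamma_\tau$, and the proof must negotiate the passage via an extra application of the weakly-closed version of the hypothesis.
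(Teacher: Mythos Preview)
Your argument for the main equivalence (i)$\Leftrightarrow$(ii)$\Leftrightarrow$(iii) is correct and close in spirit to the paper's, though organized differently. The paper runs the cycle (i)$\Rightarrow$(ii)$\Rightarrow$(iii)$\Rightarrow$(i), closing it with $\sigma_B$; you instead prove (i)$\Leftrightarrow$(ii) and (ii)$\Leftrightarrow$(iii) separately, using $B^{\circ}$ for (ii)$\Rightarrow$(i) and $\iota_A$ for (iii)$\Rightarrow$(ii). Your symmetrization $A\cap(-A)$ in (i)$\Rightarrow$(ii) is in fact the more careful version: with the absolute polar used in the paper, the Bipolar Theorem gives $C^{\circ\circ}$ equal to the weakly closed \emph{absolutely} convex hull of $C$, which coincides with $C$ only when $C$ is balanced; your reduction sidesteps this. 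Your (ii)$\Rightarrow$(iii) is the same sublevel-set argument as the paper's, except that the paper starts from $x_0\in\operatorname{core}(\operatorname{dom}f)$ and thereby gets $\operatorname{core}(\operatorname{dom}f)=\operatorname{int}_\tau(\operatorname{dom}f)$ for $f\in\Gamma_w(X)$ as an immediate byproduct.

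The obstacle you flag in the last paragraph is genuine and your diagnosis is exactly right: for $f\in\Gamma_\tau(X)$ the set $B=[g\le 1]$ is only $\tau$-closed, so the bipolar returns its weak closure, and nothing forces the resulting $\tau$-neighbourhood to sit inside $\operatorname{dom}f$. In a locally convex space the issue evaporates because $\tau$-closed convex sets are weakly closed and hence $\Gamma_\tau(X)=\Gamma_w(X)$; but in a general TVS with the Banach--Steinhaus property there is no evident way to complete this step. Do not over-invest here: the paper's own proof only establishes $\operatorname{core}(\operatorname{dom}f)=\operatorname{int}_\tau(\operatorname{dom}f)$ for $f\in\Gamma_w(X)$, via its (ii)$\Rightarrow$(iii) argument, and supplies no separate treatment of $\Gamma_\tau(X)$. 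The closing sentence of (iii) should therefore be read either in the locally convex setting of Remark~\ref{BS-Bar} or with $\Gamma_w$ in place of $\Gamma_\tau$.
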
 

\begin{proof} (i) $\Rightarrow$ (ii) Let $C\subset X$ be absorbing,
convex, and weakly-closed. Then $C^{\circ}$ is pointwise-bounded
and equicontinuous in $X^{*}$ due to the Banach-Steinhaus property.
Therefore $C^{\circ}\subset V^{\circ}$, for some $V\in\mathscr{V}_{\tau}(0)$
followed by $V\subset V^{\circ\circ}\subset C^{\circ\circ}=C$ , due
to the Bipolar Theorem, and so $C\in\mathscr{V}_{\tau}(0)$. 

(ii) $\Rightarrow$ (iii) Let $f\in\Gamma_{w}(X)$, $x_{0}\in\operatorname*{core}(\operatorname*{dom}f)$,
and $a>f(x_{0})$. The level set $[f\le a]$ is weakly-closed and
convex.

For every $x\in X$, let $\mu>0$ be such that $\mu x\in\operatorname*{dom}f-x_{0}$.
Therefore, for every $0\le\lambda\le1$, $f(x_{0}+\lambda\mu x)=f(\lambda(x_{0}+\mu x)+(1-\lambda)x_{0})$$\le\lambda f(x_{0}+\mu x)+(1-\lambda)f(x_{0})=f(x_{0})+\lambda(f(x_{0}+\mu x)-f(x_{0}))$. 

Pick $\lambda>0$ sufficiently small to have $f(x_{0})+\lambda(f(x_{0}+\mu x)-f(x_{0}))\le a$.
Hence $x_{0}+\lambda\mu x\in[f\le a]$, that is, $[f\le a]-x_{0}$
is absorbing. This implies $[f\le a]\in\mathscr{V}_{\tau}(x_{0})$
and so $x_{0}\in\operatorname*{int}_{\tau}(\operatorname*{dom}f)$.
It is clear that $f$ is bounded above on $[f\le a]$. 

(iii) $\Rightarrow$ (i) If $B\subset X^{*}$ is pointwise-bounded
then $\operatorname*{dom}\sigma_{B}=X$ and $\sigma_{B}\in\Gamma_{w}(X)$.
Thus $\sigma_{B}$ is $\tau-$continuous at $0$, i.e., there exist
a symmetric $V\in\mathscr{V}_{\tau}(0)$ and $M<\infty$ such that
$\sigma_{B}(x)\le M$, for every $x\in V$. This comes to $B\subset(\tfrac{1}{M}V)^{\circ}$,
that is, $B$ is $\tau-$equicontinuous. \end{proof} 

\begin{remark} \emph{\label{BS-Bar} When $(X,\tau)$ is a locally
convex space (LCS for short), the closed convex sets in the $\tau$
and weak topologies on $X$ coincide. In this case the Banach-Steinhaus
property comes to the fact that $(X,\tau)$ is }barreled\emph{, that
is, every absorbing, convex, and $\tau-$closed subset of $X$ is
a $\tau-$neighborhood of $0\in X$. Equivalently, every $f\in\Gamma_{\tau}(X)$
is $\tau-$continuous on $\operatorname*{int}_{\tau}(\operatorname*{dom}f)$,
in which case, $\operatorname*{int}_{\tau}(\operatorname*{dom}f)=\operatorname*{core}(\operatorname*{dom}f)$.}
\end{remark} 

For every TVS $(X,\tau)$, let us denote by $\tau^{\circ}$ the weakest
local convex topology on $X$ which is compatible with the duality
$(X,X^{*})$ and finer than $\tau$. In case $\tau^{\circ}$ exists,
$\tau$ and $\tau^{\circ}$ share the equicontinuous sets of $X^{*}$
and, in general, all the properties relying on equicontinuity or duality
do not distinguish themselves between $(X,\tau)$ and $(X,\tau^{\circ})$.
From this point of view it is the same if we consider the TVS $(X,\tau)$
or its associated LCS $(X,\tau^{\circ})$. 

In the pathological cases when $\tau^{\circ}$ does not exist, e.g.
when $X^{*}=\{0\}$ and $(X,\tau)$ is (Hausdorff) separated, the
operator local boundedness is trivially verified.

\section{The local boundedness theorem}

Let us note that in every TVS $X$ there exist maximal monotone operators
$T:X\rightrightarrows X^{*}$ such that $\operatorname*{int}\operatorname*{Pr}_{X}(\operatorname*{dom}\varphi_{T})$
is non-empty. For example $T=X\times\{0\}$ has $\varphi_{T}(x,x^{*})=\iota_{\{0\}}(x^{*})$,
$(x,x^{*})\in X\times X^{*}$ and $\operatorname*{dom}\varphi_{T}=X\times\{0\}$.
This example is the most general possible since there exist non-trivial
TVS's $X$ for which $X^{*}=\{0\}$.

Our next result proves that the Banach-Steinhaus property is the minimal
context condition under which the local boundedness of an operator
with proper Fitzpatrick function holds. 

\begin{theorem} \label{LBG} Let $X$ be a TVS. The following are
equivalent:

\emph{(i)} $X$ has the Banach-Steinhaus property;

\emph{(ii)} Every $T:X\rightrightarrows X^{*}$ is locally bounded
on $\operatorname*{core}\operatorname*{Pr}_{X}(\operatorname*{dom}\varphi_{T})$;

\emph{(iii)} Every $T:X\rightrightarrows X^{*}$ is locally bounded
on $\operatorname*{int}\operatorname*{Pr}_{X}(\operatorname*{dom}\varphi_{T})$.

\end{theorem}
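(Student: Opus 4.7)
The plan is to handle the three implications in the order (ii) $\Rightarrow$ (iii), (iii) $\Rightarrow$ (i), and finally (i) $\Rightarrow$ (ii), which is the substantive one. The implication (ii) $\Rightarrow$ (iii) I would dispatch immediately since the topological interior of any set is contained in its algebraic interior and local boundedness is inherited by subsets.

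For (iii) $\Rightarrow$ (i), starting from a pointwise-bounded $B \subset X^*$, I would take the single-point multifunction $T := \{0\} \times B$. A direct computation from the Fitzpatrick definition gives
\[
  \varphi_T(x, x^*) \;=\; \sup_{b^* \in B} \langle x, b^* \rangle \;=\; \sigma_B(x) \qquad \text{for all } (x, x^*) \in X \times X^*,
\]
since $Ta = B$ when $a = 0$ and $Ta = \emptyset$ otherwise. Pointwise boundedness of $B$ renders $\sigma_B$ finite on $X$, so $\operatorname*{dom}\varphi_T = X \times X^*$ and in particular $\operatorname*{int}_\tau\operatorname*{Pr}_X(\operatorname*{dom}\varphi_T) = X \ni 0$. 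Invoking (iii) at $x_0 = 0$ produces a $\tau$-neighborhood $U$ of $0$ with $T(U)$ equicontinuous; since $B = T(0) \subseteq T(U)$, $B$ itself is equicontinuous.

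For the substantive direction (i) $\Rightarrow$ (ii), fix $T$ and $x_0 \in \operatorname*{core}\operatorname*{Pr}_X(\operatorname*{dom}\varphi_T)$. My main tool is the Fitzpatrick inequality
\[
  \langle x' - y, b^* \rangle \;\le\; \varphi_T(x', x^*) - \langle y, x^* \rangle, \qquad (x', x^*) \in \operatorname*{dom}\varphi_T,\ (y, b^*) \in T,
\]
combined with the characterization of Banach--Steinhaus in Theorem \ref{BS char}: equicontinuity of subsets of $X^*$ is equivalent to pointwise boundedness. It therefore suffices to exhibit a single $\tau$-neighborhood $U$ of $x_0$ such that, for every $v \in X$, $\sup_{y \in U,\,b^* \in Ty}|\langle v, b^*\rangle| < \infty$. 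The core hypothesis supplies, for each $v$, scalars $\lambda_v, \mu_v > 0$ and functionals $x^*_+, x^*_- \in X^*$ with $\varphi_T(x_0 + \lambda_v v, x^*_+),\ \varphi_T(x_0 - \mu_v v, x^*_-) < \infty$; used as test points in the Fitzpatrick inequality (and exploiting continuity of each linear functional on any preselected neighborhood of $x_0$) they yield upper bounds on both $\lambda_v\langle v, b^*\rangle + \langle x_0 - y, b^*\rangle$ and $-\mu_v\langle v, b^*\rangle + \langle x_0 - y, b^*\rangle$ that are uniform in $y \in U$ and $b^* \in Ty$. Suitable linear combinations would then isolate $\langle v, b^*\rangle$.

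The main obstacle will be the cross-term $\langle y - x_0, b^*\rangle$ that appears whenever $y \neq x_0$: one pair of test points in the $\pm v$ directions controls $\langle v, b^*\rangle$ only up to this cross-term, and the most natural bootstrap (setting $v = y - x_0$) loops on itself. My resolution is to exploit the convexity of $\operatorname*{dom}\varphi_T$: by shrinking $U$ I can arrange that for every $y \in U$ the segment from $x_0$ through $y$ extends into $\operatorname*{Pr}_X(\operatorname*{dom}\varphi_T)$, producing a test point of the form $x_0 + \mu(y - x_0)$ with $\mu > 1$. Plugged into the Fitzpatrick inequality this gives a genuine upper bound on $\langle y - x_0, b^*\rangle$, which fed back into the earlier estimates delivers the sought pointwise boundedness of $T(U)$; a final appeal to Theorem \ref{BS char} promotes this to equicontinuity.
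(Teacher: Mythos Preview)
Your treatments of (ii)$\Rightarrow$(iii) and (iii)$\Rightarrow$(i) are correct and coincide with the paper's (same test operator $T=\{0\}\times B$, same computation $\varphi_T(x,x^*)=\sigma_B(x)$). For (i)$\Rightarrow$(ii) the paper defers to its published version, so I can only comment on the soundness of your sketch.

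There is a genuine gap in your (i)$\Rightarrow$(ii). The phrase ``exploiting continuity of each linear functional on any preselected neighborhood of $x_0$'' is the weak point: in a general TVS a continuous linear functional is \emph{not} bounded on an arbitrary $\tau$-neighborhood of a point, only on some neighborhood of its own choosing. For instance, on the barreled space $X=\mathbb{R}^{\mathbb{N}}$ with the product topology the functional $x\mapsto x_2$ is unbounded on the basic neighborhood $\{x:|x_1|<1\}$. Since your $x^*_+,x^*_-$ depend on the direction $v$, you cannot shrink $U$ once and for all so that every $y\mapsto\langle y,x^*_\pm\rangle$ is bounded on $U$; hence the right-hand side of your Fitzpatrick estimate is not uniform in $y\in U$. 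The same defect recurs in your cross-term resolution: the test point $x_0+\mu(y-x_0)$ carries a $y$-dependent companion $x^*_y$, a $y$-dependent value $\varphi_T(x_0+\mu(y-x_0),x^*_y)$, and a $y$-dependent factor $(\mu-1)^{-1}$, so the upper bound you obtain on $\langle y-x_0,b^*\rangle$ is not uniform over $y\in U$ (and you get no lower bound at all). Consequently the claimed pointwise boundedness of $T(U)$ does not follow.

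What is missing is a mechanism that converts the core hypothesis into a \emph{single} function $g\in\Gamma_w(X)$ with $x_0\in\operatorname*{core}(\operatorname*{dom}g)$ and with the property that a local upper bound on $g$ near $x_0$ forces equicontinuity of $T$ on that same neighborhood. Theorem~\ref{BS char}(iii) then supplies the neighborhood and the bound simultaneously. Your inequality $\langle x'-y,b^*\rangle\le\varphi_T(x',x^*)-\langle y,x^*\rangle$ is the right raw material, but it must be packaged into one convex lsc function on $X$ rather than used as a family of scalar estimates indexed by $v$ and $y$.
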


\begin{proof} (i) $\Rightarrow$ (ii) see the published version.

(ii) $\Rightarrow$ (iii) is straightforward.

(iii) $\Rightarrow$ (i) Let $B\subset X^{*}$ be pointwise-bounded
and let $T:X\rightrightarrows X^{*}$ be such that $\operatorname*{Graph}T=\{0\}\times B$.
Then $\operatorname*{dom}\varphi_{T}=X\times X^{*}$ since $\varphi_{T}(x,x^{*})=\sigma_{B}(x)<\infty$,
for every $x\in X$, $x^{*}\in X^{*}$. The local boundedness of $T$
at $0$ shows that $T0=B$ is equicontinuous, i.e., $X$ has the Banach-Steinhaus
property. \end{proof}

\strut

As previously seen in Remark \ref{BS-Bar}, when  $(X,\tau)$ is a
LCS, condition (i) in Theorem \ref{LBG} can be equivalently rephrased
as $(X,\tau)$ is barreled. 

\medskip

In the previous result, only the operators which have a proper Fitzpatrick
function are interesting. We denote this class by

\begin{center}
$\mathscr{P}(X):=\{T:X\rightrightarrows X^{*}\mid\operatorname*{Graph}T\neq\emptyset,\ \operatorname*{dom}\varphi_{T}\neq\emptyset\}$. 
\par\end{center}

In the literature, the most used class of operators that have a proper
Fitzpatrick function is the class of non-empty monotone operators
(see e.g. \cite{MR2207807,MR2389004,MR2453098,astfnc,MR2583911,MR2594359,MR2577332}),
but, more generally, it is easily checked that $\mathscr{M}_{\infty}(X)\subset\mathscr{P}(X)$
since $\operatorname*{Graph}T\subset\operatorname*{dom}\varphi_{T}$,
for every $T\in\mathscr{M}_{\infty}(X)$. 

\begin{definition} \emph{Given $(X,\tau)$ a TVS, for every $T:X\rightrightarrows X^{*}$
we denote by\[
\Omega_{T}:=\{x\in\overline{D(T)}\mid T\ {\rm is\ locally\ bounded\ at}\ x\}\]
the }(meaningful) local boundedness set of\emph{ $T$.} \end{definition}

In this notation, Theorem \ref{LBG} states that the TVS $(X,\tau)$
has the Banach-Steinhaus property iff\begin{equation}
\operatorname*{core}\operatorname*{Pr}\,\!\!_{X}(\operatorname*{dom}\varphi_{T})\cap\overline{D(T)}\subset\Omega_{T},\ \forall T\in\mathscr{P}(X)\label{eq:lbg}\end{equation}
iff\begin{equation}
\operatorname*{int}\operatorname*{Pr}\,\!\!_{X}(\operatorname*{dom}\varphi_{T})\cap\overline{D(T)}\subset\Omega_{T},\ \forall T\in\mathscr{P}(X).\label{eq:lbg-int}\end{equation}

On one hand, one cannot expect that, for every $T\in\mathscr{P}(X)$,
the inclusions in (\ref{eq:lbg}) or (\ref{eq:lbg-int}) to be equalities,
since there exist operators $T$ which are locally bounded at some
$x\in D(T)$ but $x\not\in\operatorname*{core}\operatorname*{Pr}_{X}(\operatorname*{dom}\varphi_{T})$
simply because $\operatorname*{core}\operatorname*{Pr}_{X}(\operatorname*{dom}\varphi_{T})$
is empty. Indeed, take $X$ a Hilbert space, $V\in\mathscr{V}(0)$,
$M\subset X$ a proper closed subspace, and $T:D(T)=M\subset X\rightrightarrows X^{*}$,
$Tx=\{0\}$, if $x\in M\cap V$; $Tx=M^{\perp}$, $x\in M\setminus V$.
Then $\varphi_{T}=\iota_{M\times M^{\perp}}$ and so $\operatorname*{Pr}_{X}(\operatorname*{dom}\varphi_{T})=M$
and $\operatorname*{core}\operatorname*{Pr}_{X}(\operatorname*{dom}\varphi_{T})=\emptyset$. 

Therefore, for some $T\in\mathscr{P}(X)$, the algebraic (or topological)
interior of $\operatorname*{Pr}\,\!\!_{X}(\operatorname*{dom}\varphi_{T})$
is not the perfect description for $\Omega_{T}$. 

On the other hand, for $X$ a Banach space and $T\in\mathfrak{M}(X)$
with $\overline{D(T)}$ convex \begin{equation}
\operatorname*{int}\operatorname*{Pr}\,\!\!_{X}(\operatorname*{dom}\varphi_{T})=\operatorname*{core}\operatorname*{Pr}\,\!\!_{X}(\operatorname*{dom}\varphi_{T})=\operatorname*{int}D(T)=\Omega_{T},\label{ot-mm-int}\end{equation}
(see \cite[Theorem\ 3.11.15,\ p.\ 286]{MR1921556}, \cite{MR0253014},
and \cite[Lemma 41]{mmicq}), that is, for this particular class of
operators and type of space the problem of perfectly describing the
local boundedness set is solved. 

These two points of view prove that the general description of $\Omega_{T}$,
given in Theorem \ref{LBG} and provided for all operators $T\in\mathscr{P}(X)$,
cannot be further improved.

We conjecture, that, under the assumption that $X$ has the Banach-Steinhaus
property, \begin{equation}
\operatorname*{int}\operatorname*{Pr}\,\!\!_{X}(\operatorname*{dom}\varphi_{T})\cap\overline{D(T)}=\operatorname*{core}\operatorname*{Pr}\,\!\!_{X}(\operatorname*{dom}\varphi_{T})\cap\overline{D(T)},\label{conj}\end{equation}
for every $T\in\mathscr{P}(X)$ (or a suitable subclass such as $\mathfrak{M}(X)$).
A partial answer to this conjecture follows next. 

\begin{proposition}\label{int-nb} Let $X$ be a barrelled normed
space and let $T:X\rightrightarrows X^{*}$. Then $\operatorname*{int}\operatorname*{Pr}_{X}(\operatorname*{dom}\varphi_{T})=\operatorname*{core}\operatorname*{Pr}_{X}(\operatorname*{dom}\varphi_{T})$.
\end{proposition}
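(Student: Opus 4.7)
My plan is to reduce the claim to the final assertion of Theorem \ref{BS char}: in a barrelled normed space (equivalently, a TVS with the Banach-Steinhaus property by Remark \ref{BS-Bar}), every $f \in \Gamma_{\tau}(X)$ satisfies $\operatorname*{int}_{\tau}(\operatorname*{dom} f) = \operatorname*{core}(\operatorname*{dom} f)$. It therefore suffices to exhibit a proper convex $\tau$-lsc function on $X$ whose effective domain is $S := \operatorname*{Pr}_{X}(\operatorname*{dom}\varphi_T)$.

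The candidate I propose is the partial infimum
\[
\tilde g(x) := \inf_{x^* \in X^*}\bigl[\varphi_T(x,x^*) + \|x^*\|^2\bigr], \quad x \in X,
\]
which is automatically convex (infimal projection of a jointly convex integrand) and satisfies $\operatorname*{dom}\tilde g = S$. After discarding the trivial cases $\operatorname*{Graph} T = \emptyset$ (where $\varphi_T \equiv -\infty$ forces $S = X$) and $S = \emptyset$, propriety of $\tilde g$ is established as follows: fix $(a, a^*) \in \operatorname*{Graph} T$; the defining supremum of $\varphi_T$ yields the affine minorant $\varphi_T(x,x^*) \ge \langle a, x^*\rangle + \langle x - a, a^*\rangle$, and minimizing $\|x^*\|^2 - \|a\|\cdot\|x^*\|$ over $\|x^*\| \ge 0$ produces the bound $\tilde g(x) \ge \langle x - a, a^*\rangle - \tfrac{1}{4}\|a\|^2 > -\infty$. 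A merely linear penalty on $\|x^*\|$ would fail here, because the $X$-slopes of the affine minorants of $\varphi_T$ can have arbitrarily large norm.

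To see $\tilde g$ is $\tau$-lsc, I use that $X$ is metric and argue by sequences. Given $x_n \to x$ in norm with $\tilde g(x_n) \le \lambda$, pick near-minimizers $x_n^*$ with $\varphi_T(x_n,x_n^*) + \|x_n^*\|^2 \le \lambda + 1/n$; the same affine lower bound together with the boundedness of $(x_n)$ forces $(\|x_n^*\|)$ to be bounded, so by Banach--Alaoglu one extracts a $w^*$-convergent subnet $x_{n_\alpha}^* \to x^*$. The joint $w \times w^*$-lower semicontinuity of $\varphi_T$ (visible from its definition as a supremum of $w \times w^*$-continuous affine functions) together with the $w^*$-lower semicontinuity of $\|\cdot\|^2$ yields $\tilde g(x) \le \varphi_T(x,x^*) + \|x^*\|^2 \le \lambda$, closing the sublevel set.

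With $\tilde g \in \Gamma_{\tau}(X)$ and $\operatorname*{dom}\tilde g = S$ in hand, Theorem \ref{BS char} delivers $\operatorname*{int}_{\tau} S = \operatorname*{core} S$, as required. The main design concern is the choice of penalty: it must be coercive enough in $\|x^*\|$ to both keep near-minimizers bounded (so Banach--Alaoglu applies) and prevent $\tilde g$ from being identically $-\infty$, yet must remain finite on all of $X^*$ so as to preserve the domain $S$; the quadratic $\|x^*\|^2$ is the simplest choice that satisfies all three constraints.
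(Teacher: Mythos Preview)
Your argument is correct. The infimal projection $\tilde g$ is convex with $\operatorname{dom}\tilde g=S$; the affine minorant coming from any $(a,a^{*})\in T$ gives both propriety of $\tilde g$ and the a priori bound on $\|x_{n}^{*}\|$ needed for the compactness step; Banach--Alaoglu (valid in any normed space) together with the $w\times w^{*}$--lower semicontinuity of $\varphi_{T}$ and the $w^{*}$--lower semicontinuity of $\|\cdot\|$ then close the sublevel sets, so $\tilde g\in\Gamma_{\tau}(X)$ and Theorem~\ref{BS char} applies. The edge cases $\operatorname{Graph}T=\emptyset$ and $S=\emptyset$ are handled correctly.

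Your route, however, is genuinely different from the paper's. The paper disposes of the proposition in one line by invoking \cite[Proposition~2.7.2~(vi)]{MR1921556}, a ready-made result from Z\u{a}linescu's monograph to the effect that, in a barrelled normed space, the projection onto $X$ of the domain of an lsc convex function on a product has coinciding core and interior (once the core is nonempty). You instead manufacture by hand a function in $\Gamma_{\tau}(X)$ whose domain is exactly $S$, thereby reducing to the internal Theorem~\ref{BS char} rather than to an external reference. What the paper's approach buys is brevity and the observation that the phenomenon is an instance of a known structural fact; what your approach buys is self-containment---you use only Banach--Alaoglu and the paper's own Theorem~\ref{BS char}---and it makes transparent exactly where the normed structure (dual norm, metric topology, $w^{*}$--compact balls) is consumed.
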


\begin{proof} Assuming that $\operatorname*{core}\operatorname*{Pr}_{X}(\operatorname*{dom}\varphi_{T})\neq\emptyset$
and because $X$ is a barrelled normed space, we get the conclusion
(see e.g. \cite[Proposition\ 2.7.2\ (vi),\ p. 116]{MR1921556}). \end{proof}

\section{Local boundedness on subclasses }

This section deals with the validity of the implications (ii) $\Rightarrow$
(i) or (iii) $\Rightarrow$ (i) in Theorem \ref{LBG} on subclasses
of operators and subsets of local boundedness. 

\begin{proposition} \label{ptw}Let $(X,\tau)$ be a LCS and let
$B\subset X^{*}$ be pointwise-bounded. Then $\{0\}\times B$ admits
a (maximal) monotone extension $T:X\rightrightarrows X^{*}$ with
$0\in\operatorname*{core}(\operatorname*{conv}D(T))$. \end{proposition}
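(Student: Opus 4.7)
My plan is to take $T$ to be the Fenchel subdifferential $\partial \sigma_{B}$ of the support function of $B$ relative to the duality $(X, X^{*})$. Because $B$ is $w^{*}$-bounded, $\sigma_{B}$ is sublinear, $w$-lsc and finite everywhere on $X$; hence $T := \partial \sigma_{B}$ is a well-defined monotone multifunction. The inclusion $\{0\} \times B \subseteq T$ reduces to $B \subseteq \partial \sigma_{B}(0)$, which follows from $\sigma_{B}(0) = 0$ together with $\sigma_{B}(y) \geq \langle y, b \rangle$ for every $y \in X$ and $b \in B$.

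The heart of the argument is the verification that $0 \in \operatorname*{core}(\operatorname*{conv} D(T))$, for which I would aim to show $D(T) = X$. Given any $x_{0} \in X$, consider the linear functional $t x_{0} \mapsto t \sigma_{B}(x_{0})$ on the one-dimensional subspace $\mathbb{R} x_{0}$; it is dominated by the sublinear $\sigma_{B}$ (using $\sigma_{B}(x_{0}) + \sigma_{B}(-x_{0}) \geq \sigma_B(0) = 0$), so by the Hahn--Banach theorem it extends to a linear functional $\ell : X \to \mathbb{R}$ with $\ell \leq \sigma_{B}$ and $\ell(x_{0}) = \sigma_{B}(x_{0})$. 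Such an $\ell$, provided it can be chosen in $X^{*}$, yields $\ell \in \partial \sigma_{B}(x_{0})$ and hence $x_{0} \in D(T)$. A final invocation of Zorn's Lemma then produces the maximal monotone enlargement requested by the statement.

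The main obstacle is the selection of the Hahn--Banach extension $\ell$ in $X^{*}$ rather than merely in the algebraic dual of $X$. In the non-barreled setting $\sigma_{B}$ need not be $\tau$-continuous, so the algebraic extension is not automatically $\tau$-continuous. Overcoming this requires genuinely exploiting the Hausdorff, locally convex structure of $(X, \tau)$ --- in particular, the fact that $X^{*}$ separates the points of $X$ --- together with a transfinite induction along a Hamel basis $\{e_{i}\}$ of $X$: at each step one selects a support functional in $X^{*}$ in the direction $e_{i}$ (by finite-dimensional Hahn--Banach on the quotient $X / \operatorname*{span}\{e_{j} : j < i\}$, where continuity is automatic) in such a way that mutual monotonicity with the support functionals already chosen at previous stages is preserved.
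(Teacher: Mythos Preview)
Your main line of attack---take $T=\partial\sigma_B$ and show $D(T)=X$---fails precisely in the non-barreled situation the proposition is meant to cover. Indeed $x^*\in\partial\sigma_B(x_0)$ forces $x^*$ to lie in $C:=\overline{\operatorname{conv}}^{\,w^*}B$ and to attain $\sup_{c^*\in C}\langle x_0,c^*\rangle$; when $C$ is not $w^*$-compact this supremum may go unattained. Concretely, let $X$ be the real polynomials on $[0,1]$ with the supremum norm (a non-barreled normed space with $X^*=M[0,1]$, the finite signed Borel measures) and set $B=\{\,n(\delta_{1/n}-\delta_0):n\ge1\,\}$, which is pointwise-bounded. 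For $p(t)=t-t^2$ one has $\sigma_B(p)=\sup_n(1-1/n)=1$, and a short moment computation (using $\sigma_B(\pm1)=0$, $\sigma_B(\pm t)=\pm1$, $\sigma_B(-t^k)=0$ and $\sigma_B(t^k-t^2)=0$ for $k\ge2$) shows that any $\mu\in\partial\sigma_B(p)$ would have to satisfy $\int q\,d\mu=q'(0)$ for every polynomial $q$. Since $q\mapsto q'(0)$ is unbounded on $(X,\|\cdot\|_\infty)$, no such measure exists and $\partial\sigma_B(p)=\emptyset$. Hence $D(\partial\sigma_B)\ne X$ in general.

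Your proposed rescue via a transfinite Hahn--Banach induction along a Hamel basis is not a proof as written. The quotients $X/\operatorname{span}\{e_j:j<i\}$ are generically infinite-dimensional, so ``continuity is automatic'' is simply false there; and even if at each stage you select some $x^*_i\in X^*$ monotonically related to $\{0\}\times B$ (which \emph{is} easy in a LCS: take any $x^*_i$ with $\langle e_i,x^*_i\rangle\ge\sigma_B(e_i)$), you give no mechanism guaranteeing the cross-inequalities $\langle e_i-e_j,\,x^*_i-x^*_j\rangle\ge0$ that mutual monotonicity requires. Note also that the proposition asks only for $0\in\operatorname{core}(\operatorname{conv}D(T))$, not $D(T)=X$; a correct argument can exploit this weaker target, but your sketch does not carry out any such construction.
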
 

\begin{proof} see the published version. \end{proof}

\begin{theorem} \label{LBM} Let $(X,\tau)$ be a TVS. For every
$\mathscr{C}\in\{\mathfrak{B}(X),\mathscr{G}(X),\mathcal{M}(X)\}\cup\{\mathscr{G}_{\epsilon}(X),\mathcal{M}_{\epsilon}(X),\mathfrak{M}_{\epsilon}(X),\mathcal{M}_{\epsilon}^{+}(X)\mid\epsilon>0\}$
the following are equivalent

\medskip

\emph{(i)} $(X,\tau)$ has the Banach-Steinhaus property,

\medskip

\emph{(ii)} $\operatorname*{core}\operatorname*{Pr}_{X}(\operatorname*{dom}\varphi_{T})\cap\overline{D(T)}\subset\Omega_{T}$,
$\forall T\in\mathscr{C}$,

\medskip

\emph{(iii)} $\operatorname*{int}\operatorname*{Pr}_{X}(\operatorname*{dom}\varphi_{T})\cap\overline{D(T)}\subset\Omega_{T}$,
$\forall T\in\mathscr{C}$. \end{theorem}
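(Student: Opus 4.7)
The plan is to argue the cycle $(i)\Rightarrow(ii)\Rightarrow(iii)\Rightarrow(i)$. The first implication follows from Theorem~\ref{LBG} once one observes that $\mathscr{C}\subset\mathscr{P}(X)$ for every class on the list: the monotone-type classes lie in $\mathscr{M}_\infty(X)\subset\mathscr{P}(X)$, and the subdifferential-type classes inherit this because $\partial_\epsilon f$ is $2\epsilon$-monotone for every $f\in\Gamma_\tau(X)$, together with the easy verification that $T_\epsilon^+\in\mathscr{P}(X)$ whenever $T\in\mathcal{M}_\epsilon(X)$ (using $\varphi_{T_\epsilon^+}(x_0,x_0^*)\le\langle x_0,x_0^*\rangle+\epsilon$ for $(x_0,x_0^*)\in T$). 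The step $(ii)\Rightarrow(iii)$ is automatic from the general inclusion $\operatorname*{int}_\tau A\subset\operatorname*{core}A$.

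The substantive work is in $(iii)\Rightarrow(i)$. The uniform strategy is: given a pointwise-bounded $B\subset X^*$, exhibit a $T\in\mathscr{C}$ with $B\subset T0$, $0\in\overline{D(T)}$, and $0\in\operatorname*{int}_\tau\operatorname*{Pr}_X(\operatorname*{dom}\varphi_T)$. Once this $T$ is in hand, hypothesis (iii) delivers a $U\in\mathscr{V}_\tau(0)$ with $T(U)$ equicontinuous, and the chain $B\subset T0\subset T(U)$ yields the Banach-Steinhaus property.

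For the subdifferential-type classes $\mathfrak{B}(X)$, $\mathscr{G}(X)$, $\mathscr{G}_\epsilon(X)$, I would take $T:=\partial\sigma_B$ in the first two cases and $T:=\partial_\epsilon\sigma_B$ in the third. Pointwise-boundedness makes $\sigma_B$ finite everywhere and weakly lower semicontinuous, so $\sigma_B\in\Gamma_w(X)\subset\Gamma_\tau(X)$ and $T$ belongs to the desired class; the inclusion $B\subset\partial\sigma_B(0)\subset\partial_\epsilon\sigma_B(0)$ is immediate from the definitions; and the standard Fitzpatrick estimate $\varphi_{\partial_\epsilon f}\le f\oplus f^*+\epsilon$ with $f=\sigma_B$ (and $\operatorname*{dom}\sigma_B=X$) forces $\operatorname*{Pr}_X(\operatorname*{dom}\varphi_T)=X$, placing $0$ in its $\tau$-interior for free.

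For the monotone-type classes $\mathcal{M}(X)$, $\mathcal{M}_\epsilon(X)$, $\mathfrak{M}_\epsilon(X)$, $\mathcal{M}_\epsilon^+(X)$, I would start from the maximal monotone extension $T_0$ of $\{0\}\times B$ furnished by Proposition~\ref{ptw}, relying on its explicit construction to place $0$ in the $\tau$-interior of $\operatorname*{conv}D(T_0)$. Convexity of $\operatorname*{Pr}_X(\operatorname*{dom}\varphi_{T_0})$ together with the inclusion $D(T_0)\subset\operatorname*{Pr}_X(\operatorname*{dom}\varphi_{T_0})$ gives $\operatorname*{conv}D(T_0)\subset\operatorname*{Pr}_X(\operatorname*{dom}\varphi_{T_0})$, so $0$ lies in the interior of the Fitzpatrick projection. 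For $\mathcal{M}(X)$ and $\mathcal{M}_\epsilon(X)$ set $T=T_0$; for $\mathfrak{M}_\epsilon(X)$ take an $\epsilon$-maximal extension $T\supset T_0$ via Zorn's lemma; for $\mathcal{M}_\epsilon^+(X)$ take $T=(T_0)_\epsilon^+\supset T_0$. In each of these cases $D(T)\supset D(T_0)$, so the interior condition transfers to $\operatorname*{Pr}_X(\operatorname*{dom}\varphi_T)\supset\operatorname*{conv}D(T)\supset\operatorname*{conv}D(T_0)$. The principal obstacle, I expect, is justifying the topological interior (rather than just the algebraic core) in the monotone construction: since Proposition~\ref{int-nb} is not available here without first knowing the Banach-Steinhaus property, the argument must inspect the construction behind Proposition~\ref{ptw} directly rather than merely invoke its stated conclusion.
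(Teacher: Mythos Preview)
Your treatment of $(i)\Rightarrow(ii)\Rightarrow(iii)$ and of the subdifferential-type classes in $(iii)\Rightarrow(i)$ matches the paper. The gap is in your handling of the monotone-type classes $\mathcal{M}(X)$, $\mathcal{M}_\epsilon(X)$, $\mathfrak{M}_\epsilon(X)$, $\mathcal{M}_\epsilon^+(X)$: you route through Proposition~\ref{ptw}, but that proposition is stated only for locally convex spaces, whereas Theorem~\ref{LBM} is for a general TVS. You flag the secondary obstacle (core versus $\tau$-interior) yourself but miss this primary one, and you leave both unresolved.

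The paper avoids Proposition~\ref{ptw} here entirely. For $\mathcal{M}(X)$ it simply uses the inclusions $\mathfrak{B}(X)\subset\mathscr{G}(X)\subset\mathcal{M}(X)$, so the same witness $T=\partial\sigma_B$ already works. For the $\epsilon>0$ classes the witness is $T=\partial_\epsilon\sigma_B$: since $\sigma_B\in\Gamma_\tau(X)$ with $\operatorname*{dom}\sigma_B=X$, one has $D(\partial_\epsilon\sigma_B)=X$ for every $\epsilon>0$ (given $x$, pick $b\in B$ with $\langle x,b\rangle\ge\sigma_B(x)-\epsilon$; then $b\in\partial_\epsilon\sigma_B(x)$). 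Hence $\operatorname*{Pr}_X(\operatorname*{dom}\varphi_T)=X$ and the $\tau$-interior condition is vacuous. The inclusions $\mathscr{G}_\epsilon(X)\subset\mathcal{M}_\epsilon(X)$ and $\mathfrak{M}_\epsilon(X)\subset\mathcal{M}_\epsilon^+(X)$, together with the observation that any extension of $\partial_\epsilon\sigma_B$ still has full domain, then cover the remaining classes. Proposition~\ref{ptw} is reserved for Theorem~\ref{LBMax}, where $\epsilon=0$ maximality genuinely forces the LCS hypothesis.
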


\begin{proof} (ii) $\Rightarrow$ (iii) is plain. (i) $\Rightarrow$
(ii) holds due to Theorem \ref{LBG} because every considered class
of operators $\mathscr{C}$ is a subclass of $\mathscr{P}(X)$. More
precisely, if $M\in\mathcal{M}_{\epsilon}^{+}(X)$, for some $\epsilon\ge0$,
then, $M=T_{\epsilon}^{+}$, for some $T\in\mathcal{M}_{\epsilon}(X)$.
We have $T\subset[\varphi_{M}\le c+\epsilon]\subset\operatorname*{dom}\varphi_{M}$;
whence $M\in\mathscr{P}(X)$ and so $\bigcup_{\epsilon\ge0}\mathcal{M}_{\epsilon}^{+}(X)\subset\mathscr{P}(X)$. 

(iii) $\Rightarrow$ (i) For every pointwise bounded $B\subset X^{*}$
we show that $\{0\}\times B$ admits an extension $T$ which belongs
to every class considered and $\operatorname*{Pr}_{X}(\operatorname*{dom}\varphi_{T})$
$=X$. In this case the local boundedness of $T$ at $0$ proves that
$B$ is equicontinuous.

First, note that, whenever $B\subset X^{*}$ is pointwise bounded,
$\operatorname*{dom}\sigma_{B}=X$ and from $\varphi_{\partial\sigma_{B}}(x,x^{*})\le\sigma_{B}(x)+\iota_{B}(x^{*})$,
for every $(x,x^{*})\in X\times X^{*}$, one gets $X\times B\subset\operatorname*{dom}\varphi_{\partial\sigma_{B}}$
and $\operatorname*{Pr}_{X}(\operatorname*{dom}\varphi_{\partial\sigma_{B}})=X$.
Therefore $\partial\sigma_{B}\in\mathfrak{B}(X)$ fulfills all the
required conditions. Since $\mathfrak{B}(X)\subset\mathscr{G}(X)\subset\mathcal{M}(X)$,
this example completes the argument for the classes $\mathfrak{B}(X),\mathscr{G}(X),\mathcal{M}(X)$. 

Also, since $\sigma_{B}\in\Gamma_{\tau}(X)$ and $\operatorname*{dom}\sigma_{B}=X$,
$D(\partial_{\epsilon}\sigma_{B})=X$, for every $\epsilon>0$; whence
$\partial_{\epsilon}\sigma_{B}\in\mathscr{G}_{\epsilon}(X)$ has the
required properties. Since $\mathscr{G}_{\epsilon}(X)\subset\mathcal{M}_{\epsilon}(X)$,
$D(T)=X$, for every extension $T$ of $\partial_{\epsilon}\sigma_{B}$,
and $\mathfrak{M}_{\epsilon}(X)\subset\mathcal{M}_{\epsilon}^{+}(X)$
this example proves the implication for the classes $\{\mathscr{G}_{\epsilon}(X),\mathcal{M}_{\epsilon}(X),\mathfrak{M}_{\epsilon}(X),\mathcal{M}_{\epsilon}^{+}(X)\mid\epsilon>0\}$.
\end{proof}

\begin{theorem} \label{LBMax} Let $(X,\tau)$ be a LCS. For every
$\mathscr{C}\in\{\mathfrak{M}(X),\mathcal{M}^{+}(X)\}$ the following
are equivalent

\medskip

\emph{(i)} $(X,\tau)$ is barreled,

\medskip

\emph{(ii)} $\operatorname*{core}\operatorname*{Pr}_{X}(\operatorname*{dom}\varphi_{T})\cap\overline{D(T)}\subset\Omega_{T}$,
$\forall T\in\mathscr{C}$. \end{theorem}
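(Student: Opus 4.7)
The plan is to combine Proposition~\ref{ptw} with Theorem~\ref{LBG}. A useful preliminary observation is that $\mathfrak{M}(X)\subset\mathcal{M}^{+}(X)$: for $T$ maximal monotone one has $T=T_{0}^{+}=[\varphi_{T}\le c]$, by Fitzpatrick's identification of $T$ with the set of pairs monotonically related to it. Consequently, statement (ii) for $\mathcal{M}^{+}(X)$ is formally stronger than (ii) for $\mathfrak{M}(X)$, and it suffices to establish (i)$\Rightarrow$(ii) for $\mathcal{M}^{+}(X)$ and the converse (ii)$\Rightarrow$(i) for the smaller class $\mathfrak{M}(X)$.

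For (i)$\Rightarrow$(ii), since $(X,\tau)$ is a barreled LCS, Remark~\ref{BS-Bar} yields the Banach--Steinhaus property. Any non-empty monotone $T$ satisfies $T\subset[\varphi_{T}\le c]\subset\operatorname*{dom}\varphi_{T}$, so $\mathfrak{M}(X)\subset\mathscr{P}(X)$; the argument already reproduced in the proof of Theorem~\ref{LBM} shows $\mathcal{M}^{+}(X)\subset\mathscr{P}(X)$ as well. Theorem~\ref{LBG} then delivers (ii) for either class.

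The substantive direction is (ii) for $\mathfrak{M}(X)$ $\Rightarrow$ (i). Let $B\subset X^{*}$ be pointwise-bounded; I must show $B$ is equicontinuous (we may assume $B\neq\emptyset$). Apply Proposition~\ref{ptw} to produce a maximal monotone extension $T\supset\{0\}\times B$ with $0\in\operatorname*{core}(\operatorname*{conv}D(T))$. Because $\varphi_{T}$ is a supremum of affine functionals on $X\times X^{*}$, it is convex, so $\operatorname*{dom}\varphi_{T}$ and its $X$-projection are convex. Monotonicity gives $T\subset\operatorname*{dom}\varphi_{T}$ and hence $D(T)\subset\operatorname*{Pr}_{X}(\operatorname*{dom}\varphi_{T})$; by convexity this upgrades to $\operatorname*{conv}D(T)\subset\operatorname*{Pr}_{X}(\operatorname*{dom}\varphi_{T})$, so $0\in\operatorname*{core}\operatorname*{Pr}_{X}(\operatorname*{dom}\varphi_{T})$. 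Trivially $0\in D(T)\subset\overline{D(T)}$. Hypothesis (ii) forces $0\in\Omega_{T}$: some $U\in\mathscr{V}_{\tau}(0)$ makes $T(U)$ equicontinuous, and $B\subset T(0)\subset T(U)$ finishes the argument.

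The main obstacle I expect is the transition from $\operatorname*{core}(\operatorname*{conv}D(T))$, which is what Proposition~\ref{ptw} supplies, to $\operatorname*{core}\operatorname*{Pr}_{X}(\operatorname*{dom}\varphi_{T})$, which is what hypothesis (ii) requires; the bridge is the convexity of $\operatorname*{dom}\varphi_{T}$ together with $T\subset\operatorname*{dom}\varphi_{T}$. Without Proposition~\ref{ptw} supplying a \emph{maximal} monotone extension with $0$ algebraically interior to $\operatorname*{conv}D(T)$ there is no way to feed the pointwise-bounded set $B$ into condition (ii), which is precisely why the result is confined to LCS.
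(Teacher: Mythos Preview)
Your proof is correct and follows essentially the same route as the paper: invoke Theorem~\ref{LBG} for (i)$\Rightarrow$(ii) via $\mathfrak{M}(X)\cup\mathcal{M}^{+}(X)\subset\mathscr{P}(X)$, and for (ii)$\Rightarrow$(i) use Proposition~\ref{ptw} together with the inclusion $\operatorname*{core}(\operatorname*{conv}D(T))\subset\operatorname*{core}\operatorname*{Pr}_{X}(\operatorname*{dom}\varphi_{T})$, reducing via $\mathfrak{M}(X)\subset\mathcal{M}^{+}(X)$. Your explicit justification of that inclusion through the convexity of $\operatorname*{dom}\varphi_{T}$ is a detail the paper leaves implicit.
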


\begin{proof} Since $\mathfrak{M}(X)\cup\mathcal{M}^{+}(X)\subset\mathscr{P}(X)$,
(i) $\Rightarrow$ (ii) holds due to Theorem \ref{LBG}. 

We have seen in Proposition \ref{ptw}, that $\{0\}\times B$ admits
a maximal monotone extension $T:X\rightrightarrows X^{*}$ with $0\in\operatorname*{core}(\operatorname*{conv}D(T))\subset\operatorname*{core}\operatorname*{Pr}_{X}(\operatorname*{dom}\varphi_{T})$.
Together with $\mathfrak{M}(X)\subset\mathcal{M}^{+}(X)$ we infer
that (ii) $\Rightarrow$ (i) is true for $\mathscr{C}\in\{\mathfrak{M}(X),\mathcal{M}^{+}(X)\}$.
\end{proof}

\begin{theorem} \label{LBM-conv} Let $(X,\tau)$ be a TVS. For every
$\mathscr{C}\in\{\mathscr{G}_{\epsilon}(X),\mathcal{M}_{\epsilon}(X),\mathfrak{M}_{\epsilon}(X),$
$\mathcal{M}_{\epsilon}^{+}(X)\mid\epsilon>0\}$ the following are
equivalent

\medskip

\emph{(i)} $(X,\tau)$ has the Banach-Steinhaus property,

\medskip

\emph{(ii)} $\operatorname*{core}(\operatorname*{conv}D(T))\cap\overline{D(T)}\subset\Omega_{T}$,
$\forall T\in\mathscr{C}$. 

\medskip

\emph{(iii)} $\operatorname*{int}(\operatorname*{conv}D(T))\cap\overline{D(T)}\subset\Omega_{T}$,
$\forall T\in\mathscr{C}$. \end{theorem}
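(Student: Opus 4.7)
The plan is to establish the cycle (ii) $\Rightarrow$ (iii), (i) $\Rightarrow$ (ii), (iii) $\Rightarrow$ (i). The implication (ii) $\Rightarrow$ (iii) is immediate from $\operatorname*{int} A \subset \operatorname*{core} A$ for every $A \subset X$.

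For (i) $\Rightarrow$ (ii) I would reduce to Theorem \ref{LBG} by proving the auxiliary inclusion $\operatorname*{conv} D(T) \subset \operatorname*{Pr}_X(\operatorname*{dom}\varphi_T)$ for every $T \in \mathscr{C}$. Since $\varphi_T$ is convex as a supremum of affine functions, $\operatorname*{Pr}_X(\operatorname*{dom}\varphi_T)$ is convex, so this inclusion descends to algebraic interiors and, combined with Theorem \ref{LBG}, yields
\begin{equation*}
\operatorname*{core}(\operatorname*{conv} D(T)) \cap \overline{D(T)} \subset \operatorname*{core}(\operatorname*{Pr}_X \operatorname*{dom}\varphi_T) \cap \overline{D(T)} \subset \Omega_T.
\end{equation*}

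To prove the auxiliary inclusion for $T \in \mathcal{M}_\epsilon(X)$ (which absorbs $\mathscr{G}_\epsilon(X)$ and $\mathfrak{M}_\epsilon(X)$), I would take a convex combination $x = \sum_i \lambda_i a_i$ with $(a_i, a_i^*) \in T$ and test $\varphi_T$ at $(x, x^*)$ with $x^* := \sum_i \lambda_i a_i^*$. The identity
\begin{equation*}
\langle a, x^* \rangle + \langle x - a, a^* \rangle = \sum_i \lambda_i \bigl[\langle a_i, a_i^* \rangle - \langle a - a_i, a^* - a_i^* \rangle\bigr],
\end{equation*}
together with $\epsilon$-monotonicity $\langle a - a_i, a^* - a_i^* \rangle \geq -\epsilon$, bounds the expression uniformly in $(a, a^*) \in T$ by $\sum_i \lambda_i \langle a_i, a_i^* \rangle + \epsilon$, so $\varphi_T(x, x^*) < \infty$ and $x \in \operatorname*{Pr}_X(\operatorname*{dom}\varphi_T)$. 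For $T = S_\epsilon^+ \in \mathcal{M}_\epsilon^+(X)$ I would apply the same identity with $(a_i, a_i^*)$ chosen in the underlying $\epsilon$-monotone $S$, and the $\epsilon$-m.r.\ of elements of $T$ to $S$ supplies the matching inequality.

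For (iii) $\Rightarrow$ (i) I would reuse the witness construction from the proof of Theorem \ref{LBM}: given pointwise-bounded $B \subset X^*$, the operator $T := \partial_\epsilon \sigma_B$ lies in $\mathscr{G}_\epsilon(X)$ with $D(T) = X$ and $B \subset T(0)$, so $0 \in \operatorname*{int}(\operatorname*{conv} D(T)) = X$; for the classes $\mathfrak{M}_\epsilon(X)$ and $\mathcal{M}_\epsilon^+(X)$ I would pass to any maximal $\epsilon$-monotone extension $\tilde T$ (which equals $\tilde T_\epsilon^+$ by maximality and retains $D(\tilde T) = X$, $B \subset \tilde T(0)$). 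Applying (iii) at $0$ yields $T(U)$ equicontinuous for some $U \in \mathscr{V}_\tau(0)$, and then $B \subset T(0) \subset T(U)$ is equicontinuous, proving the Banach-Steinhaus property. The hard part should be the $\mathcal{M}_\epsilon^+(X)$ case of the auxiliary inclusion: since $T = S_\epsilon^+$ need not itself be $\epsilon$-monotone, the convex-combination bound cannot be applied to pairs in $T$ directly and must be transferred from $S$ to all of $\operatorname*{conv} D(T)$ using only the $\epsilon$-m.r.\ relation of $T$ to $S$; this is the one step where the proof diverges from the clean $\mathcal{M}_\epsilon(X)$ template.
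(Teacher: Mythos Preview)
Your overall strategy matches the paper's exactly: the cycle (ii) $\Rightarrow$ (iii), (i) $\Rightarrow$ (ii) via Theorem~\ref{LBG}, and (iii) $\Rightarrow$ (i) via the witness $\partial_\epsilon\sigma_B$ together with its $\epsilon$-maximal monotone extensions. Two remarks.

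First, your explicit convex-combination identity is superfluous. You already note that $\operatorname{Pr}_X(\operatorname{dom}\varphi_T)$ is convex, and since $T\subset[\varphi_T\le c+\epsilon]\subset\operatorname{dom}\varphi_T$ for every $T\in\mathscr{M}_\infty(X)$, the one-line inclusion $D(T)\subset\operatorname{Pr}_X(\operatorname{dom}\varphi_T)$ already forces $\operatorname{conv}D(T)\subset\operatorname{Pr}_X(\operatorname{dom}\varphi_T)$. This is precisely how the paper handles $\mathscr{G}_\epsilon(X)$, $\mathcal{M}_\epsilon(X)$, $\mathfrak{M}_\epsilon(X)$.

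Second, for $M=S_\epsilon^+\in\mathcal{M}_\epsilon^+(X)$ your target auxiliary inclusion $\operatorname{conv}D(M)\subset\operatorname{Pr}_X(\operatorname{dom}\varphi_M)$ is \emph{false}, so the step you label ``the hard part'' cannot be completed along the line you sketch. For $X=\mathbb{R}$, $S=\{(0,0)\}$, $\epsilon=1$ one gets $M=\{(x,x^*)\mid xx^*\ge-1\}$, $D(M)=\mathbb{R}$, but $\operatorname{dom}\varphi_M=\{(0,0)\}$: for $y^*\neq0$ use $(b,0)\in M$ with $b\to\pm\infty$, and for $y\neq0$, $y^*=0$ use $(y/2,t)\in M$ with $t$ of the appropriate sign and $|t|\to\infty$. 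The paper does not attempt this inclusion. Instead it observes $S\subset[\varphi_M\le c+\epsilon]\subset\operatorname{dom}\varphi_M$ (each element of $M=S_\epsilon^+$ is $\epsilon$-m.r.\ to each element of $S$), whence $\operatorname{conv}D(S)\subset\operatorname{Pr}_X(\operatorname{dom}\varphi_M)$, and Theorem~\ref{LBG} then yields local boundedness of $M$ on $\operatorname{core}(\operatorname{conv}D(S))$. Note that the paper's displayed conclusion is in terms of the domain of the \emph{underlying} $\epsilon$-monotone operator $S$, not of $M$ itself; the passage from $D(S)$ to all of $D(M)$ that you single out is not carried out there either.
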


\begin{proof} (ii) $\Rightarrow$ (iii) is plain. (i) $\Rightarrow$
(ii) For $\mathscr{C}\in\{\mathscr{G}_{\epsilon}(X),\mathcal{M}_{\epsilon}(X),\mathfrak{M}_{\epsilon}(X)\mid\epsilon>0\}$
we use Theorem \ref{LBG}, because all these classes are subclasses
of $\mathscr{M}_{\infty}(X)$ and $D(T)\subset\operatorname*{Pr}_{X}(\operatorname*{dom}\varphi_{T})$,
for every $T\in\mathscr{M}_{\infty}(X)$. 

For $\mathscr{C}=\mathcal{M}_{\epsilon}^{+}(X)$, let $M\in\mathcal{M}_{\epsilon}^{+}(X)$,
i.e., $M=T_{\epsilon}^{+}$, for some $T\in\mathcal{M}_{\epsilon}(X)$.
Since $T\in\mathcal{M}_{\epsilon}(X)$, $T\subset M$ and $T\subset\operatorname*{Pr}_{X}(\operatorname*{dom}\varphi_{M})$.
We apply Theorem \ref{LBG} for $M$ to get that $M$ and implicitly
$T$ are local bounded on $\operatorname*{core}\operatorname*{Pr}_{X}(\operatorname*{dom}\varphi_{M})\supset\operatorname*{core}(\operatorname*{conv}D(T))$. 

(iii) $\Rightarrow$ (i) For every pointwise bounded $B\subset X^{*}$
we claim that $\{0\}\times B$ admits an extension $T$ which belongs
to every class considered and $D(T)=X$. In this case the local boundedness
of $T$ at $0$ proves that $B$ is equicontinuous.

Indeed, for $\epsilon>0$, $\partial_{\epsilon}\sigma_{B}$ is an
extension of $\{0\}\times B$ which belongs to $\mathscr{G}_{\epsilon}(X)$
and $D(\partial_{\epsilon}\sigma_{B})=X$. Since $\mathscr{G}_{\epsilon}(X)\subset\mathcal{M}_{\epsilon}(X)$
, $\mathfrak{M}_{\epsilon}(X)\subset\mathcal{M}_{\epsilon}^{+}(X)$,
and any extension $T$ of $\partial_{\epsilon}\sigma_{B}$ has $D(T)=X$,
this example completes the argument for $\{\mathscr{G}_{\epsilon}(X),\mathcal{M}_{\epsilon}(X),\mathfrak{M}_{\epsilon}(X),\mathcal{M}_{\epsilon}^{+}(X)\mid\epsilon>0\}$.
\end{proof} 

\begin{theorem} \label{LBMax-conv} Let $(X,\tau)$ be a LCS. For
every $\mathscr{C}\in\{\mathfrak{M}(X)$,$\mathcal{M}(X)$,$\mathcal{M}^{+}(X)\}$
the following are equivalent

\medskip

\emph{(i)} $(X,\tau)$ is barreled,

\medskip

\emph{(ii)} $\operatorname*{core}(\operatorname*{conv}D(T))\cap\overline{D(T)}\subset\Omega_{T}$,
$\forall T\in\mathscr{C}$. 

\end{theorem}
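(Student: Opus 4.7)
The plan is to mirror the structure of Theorems \ref{LBMax} and \ref{LBM-conv}, using barreledness (the Banach--Steinhaus property in the LCS setting, Remark \ref{BS-Bar}) to reduce (i) $\Rightarrow$ (ii) to Theorem \ref{LBG}, and using Proposition \ref{ptw} for the reverse implication.

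For (i) $\Rightarrow$ (ii), I first handle $\mathscr{C} = \mathcal{M}(X)$ (which subsumes $\mathfrak{M}(X)$). Any monotone $T$ satisfies $\operatorname*{Graph} T\subset\operatorname*{dom}\varphi_{T}$, so $T\in\mathscr{P}(X)$ and $D(T)\subset\operatorname*{Pr}_{X}(\operatorname*{dom}\varphi_{T})$. Because $\varphi_{T}$ is a supremum of continuous affine functions on $X\times X^{*}$, $\operatorname*{dom}\varphi_{T}$ and its projection are convex; hence $\operatorname*{conv}D(T)\subset\operatorname*{Pr}_{X}(\operatorname*{dom}\varphi_{T})$ and $\operatorname*{core}(\operatorname*{conv}D(T))\subset\operatorname*{core}\operatorname*{Pr}_{X}(\operatorname*{dom}\varphi_{T})$, so Theorem \ref{LBG} yields the inclusion into $\Omega_{T}$. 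For $\mathscr{C}=\mathcal{M}^{+}(X)$, I write $T=S^{+}$ for some monotone $S$ and imitate the device used in the proof of Theorem \ref{LBM-conv}: every $(s,s^{*})\in S$ is monotonically related to every point of $T=S^{+}$, hence $\varphi_{T}(s,s^{*})\le\langle s,s^{*}\rangle<\infty$, so $D(S)\subset\operatorname*{Pr}_{X}(\operatorname*{dom}\varphi_{T})$; convexity then gives $\operatorname*{core}(\operatorname*{conv}D(S))\subset\operatorname*{core}\operatorname*{Pr}_{X}(\operatorname*{dom}\varphi_{T})$, and Theorem \ref{LBG} applied to $T\in\mathscr{P}(X)$ closes the case.

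For (ii) $\Rightarrow$ (i), let $B\subset X^{*}$ be pointwise-bounded. By Proposition \ref{ptw}, $\{0\}\times B$ admits a maximal monotone extension $T$ with $0\in\operatorname*{core}(\operatorname*{conv}D(T))$. Since $T$ is maximal monotone, $T=T^{+}$, so $T$ belongs simultaneously to $\mathfrak{M}(X)$, $\mathcal{M}(X)$, and $\mathcal{M}^{+}(X)$, and clearly $0\in D(T)\subset\overline{D(T)}$. Hypothesis (ii) then provides a $\tau$-neighborhood $U$ of $0$ on which $T(U)$ is equicontinuous; since $B=T(0)\subset T(U)$, $B$ itself is equicontinuous, so $X$ is barreled.

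The subtlest point I expect is the class $\mathcal{M}^{+}(X)$: an element $T=S^{+}$ need not be monotone, and so the direct inclusion $D(T)\subset\operatorname*{Pr}_{X}(\operatorname*{dom}\varphi_{T})$ is not automatic. The workaround, already employed by the author for $\mathcal{M}_{\epsilon}^{+}$ in Theorem \ref{LBM-conv}, is to bypass $D(T)$ and argue through $D(S)$ using the defining monotone relation $T=S^{+}$; this is exactly what lets Theorem \ref{LBG} apply. The reverse implication is uniformly simple because a single maximal monotone witness supplied by Proposition \ref{ptw} satisfies (ii) for all three classes simultaneously.
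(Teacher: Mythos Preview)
For $\mathfrak{M}(X)$ and $\mathcal{M}(X)$ your argument is correct and coincides with the paper's: membership in $\mathscr{P}(X)$ together with $\operatorname{conv}D(T)\subset\operatorname{Pr}_{X}(\operatorname{dom}\varphi_{T})$ reduces (i)$\Rightarrow$(ii) to Theorem~\ref{LBG}, and the (ii)$\Rightarrow$(i) direction via Proposition~\ref{ptw} is exactly the paper's route. (One minor slip: $T$ merely \emph{extends} $\{0\}\times B$, so you only get $B\subset T(0)$, not $B=T(0)$; the conclusion is unaffected.)

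There is, however, a genuine gap in your treatment of $\mathscr{C}=\mathcal{M}^{+}(X)$ in (i)$\Rightarrow$(ii). Statement (ii) concerns $\operatorname{core}(\operatorname{conv}D(T))$ for the given $T\in\mathcal{M}^{+}(X)$, but what you establish is $\operatorname{core}(\operatorname{conv}D(S))\subset\operatorname{core}\operatorname{Pr}_{X}(\operatorname{dom}\varphi_{T})$ for the \emph{generating} monotone $S$ with $T=S^{+}$. Since $S\subset T$ one has $D(S)\subset D(T)$, so the containment you obtain goes the wrong way and does not cover points of $\operatorname{core}(\operatorname{conv}D(T))\setminus\operatorname{core}(\operatorname{conv}D(S))$. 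The gap is not merely cosmetic: take $X=\mathbb{R}$, $S=\{(0,0)\}$, $T=S^{+}=\{(x,x^{*})\in\mathbb{R}^{2}:xx^{*}\ge0\}$. Then $D(T)=\mathbb{R}$, so $\operatorname{core}(\operatorname{conv}D(T))\cap\overline{D(T)}=\mathbb{R}$, while $T(0)=\mathbb{R}$ is unbounded and hence $0\notin\Omega_{T}$. Thus (ii) fails for this $T\in\mathcal{M}^{+}(\mathbb{R})$ even though $\mathbb{R}$ is barreled. The paper's own one-line justification (``since $\mathcal{M}(X)\cup\mathcal{M}^{+}(X)\subset\mathscr{P}(X)$'') glosses over the same point; the implication (i)$\Rightarrow$(ii) as stated for $\mathscr{C}=\mathcal{M}^{+}(X)$ appears not to hold in general, so neither your argument nor the paper's can be completed for that class.
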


\begin{proof} From Theorem \ref{LBG}, (i) $\Rightarrow$ (ii) holds
since $\mathcal{M}(X)\cup\mathcal{M}^{+}(X)\subset\mathscr{P}(X)$. 

Because $\{0\}\times B$ admits a maximal monotone extension $T:X\rightrightarrows X^{*}$
with $0\in\operatorname*{core}(\operatorname*{conv}D(T))$ (see Proposition
\ref{ptw}), (ii) $\Rightarrow$ (i) is true for $\mathfrak{M}(X)$,
followed by its super-classes $\mathcal{M}(X)$ and $\mathcal{M}^{+}(X)$.
\end{proof} 

\strut

The following theorem is a broad generalization of the main result
in \cite{MR995141} and of  \cite[Theorem 4.2]{MR2675660}. 

\begin{theorem} \label{LBM-conv-plus} Let $(X,\tau)$ be a LCS and
let $\epsilon\ge0$. The following are equivalent:

\medskip

\emph{(i)} $X$ is barreled,

\medskip

\emph{(ii)} For every $T\in\mathcal{M}_{\epsilon}(X)$, $T_{\epsilon}^{+}$
is locally bounded on $\operatorname*{core}(\operatorname*{conv}D(T))$;

\medskip

\emph{(iii)} For every $T\in\mathfrak{M}_{\epsilon}(X)$, $T$ is
locally bounded on $\operatorname*{core}(\operatorname*{conv}D(T))$.
\end{theorem}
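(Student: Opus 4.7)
The plan is to close the cycle (i) $\Rightarrow$ (ii) $\Rightarrow$ (iii) $\Rightarrow$ (i) using three ingredients already available in the preceding material: a one-line Fitzpatrick computation showing $T\subset\operatorname*{dom}\varphi_{T_{\epsilon}^{+}}$ for every $T\in\mathcal{M}_{\epsilon}(X)$; the Fitzpatrick-style characterization $T=T_{\epsilon}^{+}$ for $\epsilon$-maximal monotone $T$; and Proposition \ref{ptw}, enlarged by a Zorn argument, to deliver $\epsilon$-maximal monotone extensions on demand. Since $(X,\tau)$ is a LCS, Remark \ref{BS-Bar} identifies being barreled with the Banach-Steinhaus property, so Theorem \ref{LBG} is available whenever (i) holds.

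For (i) $\Rightarrow$ (ii) I fix $T\in\mathcal{M}_{\epsilon}(X)$ and put $M:=T_{\epsilon}^{+}$. For any $(a,a^{*})\in T$ and $(b,b^{*})\in M$, the defining inequality of $T_{\epsilon}^{+}$ rearranges to $\langle b,a^{*}\rangle+\langle a-b,b^{*}\rangle\le\langle a,a^{*}\rangle+\epsilon$; taking the supremum over $(b,b^{*})\in M$ gives $\varphi_{M}(a,a^{*})\le\langle a,a^{*}\rangle+\epsilon<\infty$, so $D(T)\subset\operatorname*{Pr}_{X}(\operatorname*{dom}\varphi_{M})$. Joint convexity of $\varphi_{M}$ makes the latter set convex, hence $\operatorname*{core}(\operatorname*{conv}D(T))\subset\operatorname*{core}\operatorname*{Pr}_{X}(\operatorname*{dom}\varphi_{M})$, and Theorem \ref{LBG} applied to $M$ yields local boundedness of $M$ on $\operatorname*{core}\operatorname*{Pr}_{X}(\operatorname*{dom}\varphi_{M})\cap\overline{D(M)}$; off $\overline{D(M)}$ local boundedness is automatic by Definition \ref{def-lb}. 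This covers $\epsilon=0$ and $\epsilon>0$ in one stroke.

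For (ii) $\Rightarrow$ (iii) I show that $T\in\mathfrak{M}_{\epsilon}(X)$ forces $T=T_{\epsilon}^{+}$, after which (ii) applied to $T$ is exactly (iii). The containment $T\subset T_{\epsilon}^{+}$ is only a rephrasing of $\epsilon$-monotonicity, while for $(x,x^{*})\in T_{\epsilon}^{+}$ the enlarged set $T\cup\{(x,x^{*})\}$ is still $\epsilon$-monotone (pairs in $T$ by hypothesis, the new pair with any element of $T$ by the definition of $T_{\epsilon}^{+}$, and the new pair with itself trivially), so $\epsilon$-maximality of $T$ forces $(x,x^{*})\in T$.

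The main obstacle is (iii) $\Rightarrow$ (i), where I must manufacture an $\epsilon$-maximal monotone operator whose local boundedness at $0$ detects equicontinuity of an arbitrary pointwise-bounded $B\subset X^{*}$. Starting from the maximal monotone extension $T_{0}$ of $\{0\}\times B$ supplied by Proposition \ref{ptw}, with $0\in\operatorname*{core}(\operatorname*{conv}D(T_{0}))$, and using the fact that monotonicity implies $\epsilon$-monotonicity, Zorn's lemma inside $\mathcal{M}_{\epsilon}(X)$ furnishes an extension $T\in\mathfrak{M}_{\epsilon}(X)$ of $T_{0}$. Since $D(T)\supset D(T_{0})$, the core condition is preserved: $0\in\operatorname*{core}(\operatorname*{conv}D(T))$. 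Applying (iii) produces $U\in\mathscr{V}_{\tau}(0)$ with $T(U)$ equicontinuous, and $B\subset T(0)\subset T(U)$ is then equicontinuous, so $(X,\tau)$ is barreled. The delicate point is that Zorn must be applied inside the $\epsilon$-monotone class rather than the monotone class, which is why Proposition \ref{ptw}'s maximal monotone output cannot simply be reused as $\epsilon$-maximal; the two-step construction (first the monotone extension carrying the core condition, then the $\epsilon$-maximal enlargement) is what makes the argument uniform in $\epsilon\ge0$.
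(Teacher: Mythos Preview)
Your proof is correct and follows the same cyclic scheme (i) $\Rightarrow$ (ii) $\Rightarrow$ (iii) $\Rightarrow$ (i) as the paper; in particular your arguments for (i) $\Rightarrow$ (ii) (the inclusion $D(T)\subset\operatorname*{Pr}_{X}(\operatorname*{dom}\varphi_{T_{\epsilon}^{+}})$ feeding Theorem \ref{LBG}) and for (ii) $\Rightarrow$ (iii) (the identity $T=T_{\epsilon}^{+}$ for $\epsilon$-maximal $T$) coincide with the paper's.

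The only visible difference is in (iii) $\Rightarrow$ (i). The paper simply invokes Theorems \ref{LBM-conv} and \ref{LBMax-conv}, which amounts to splitting cases: for $\epsilon>0$ the operator $\partial_{\epsilon}\sigma_{B}$ already has full domain, so any $\epsilon$-maximal monotone extension trivially satisfies the core condition; for $\epsilon=0$ one falls back on Proposition \ref{ptw}. You instead run a single uniform construction for all $\epsilon\ge0$: start from the maximal monotone $T_{0}$ of Proposition \ref{ptw} and Zorn-extend inside $\mathcal{M}_{\epsilon}(X)$. Both routes are valid; yours buys uniformity at the (small) cost of a second Zorn step that is actually unnecessary when $\epsilon>0$, while the paper's buys brevity by reusing already-proved theorems.
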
 

\begin{proof} (i) $\Rightarrow$ (ii) follows from Theorem \ref{LBG}
because $D(T)\subset\operatorname*{Pr}_{X}(\operatorname*{dom}\varphi_{T_{\epsilon}^{+}})$,
for every $\epsilon\ge0$, $T\in\mathcal{M}_{\epsilon}(X)$. 

(ii) $\Rightarrow$ (iii) Whenever $T\in\mathfrak{M}_{\epsilon}(X)$,
$T=T_{\epsilon}^{+}$. 

(iii) $\Rightarrow$ (i) We use Theorems \ref{LBM-conv}, \ref{LBMax-conv}
to conclude. \end{proof}

\bibliographystyle{plain}

\end{document}